\newtheorem{theo}{Theorem}
\newtheorem{lemma}[theo]{Lemma}
\newtheorem{corollary}[theo]{Corollary}
\theoremstyle{remark}
\begin{document}
\doublespace

\title{ Units in $F_2D_{2p}$}

\author{ Kuldeep Kaur, Manju Khan}
\email{  kuldeepk@iitrpr.ac.in, manju@iitrpr.ac.in}

\address{Department of Mathematics, Indian Institute of Technology
Ropar,Nangal Road, Rupnagar - 140 001 }

\footnotetext{}
\keywords{Unitary units ; Unit Group; Group algebra.}

\subjclass[2000]{16U60, 20C05}

\maketitle

\markboth{ Kuldeep Kaur, Manju Khan}{Unit Group of $F_2D_{2p}$}

\doublespacing

\begin{abstract}
 Let $p$ be an odd prime, $D_{2p}$ be the dihedral group of order $2p$, and
$F_{2}$ be the finite field with two elements. If $*$ denotes the canonical
involution of the group algebra $F_2D_{2p}$, then bicyclic units are unitary
units. In this note, we investigate the structure of the group
$\mathcal{B}(F_2D_{2p})$, generated by the bicyclic units of the group algebra
$F_2D_{2p}$. Further, we obtain the structure of the unit group
$\mathcal{U}(F_2D_{2p})$ and the
unitary subgroup $\mathcal{U}_*(F_2D_{2p})$, and we prove that both
$\mathcal{B}(F_2D_{2p})$ and
$\mathcal{U}_*(F_2D_{2p})$ are normal subgroups of $\mathcal{U}(F_2D_{2p})$.
\end{abstract}
\section*{introduction}
 \noindent Let $FG$ be the group algebra of the group $G$ over the field $F$ and
 $\mathcal{U}(FG)$ denotes its unit group. The anti-automorphism $g \mapsto
 g^{-1}$ of $G$ can be extended linearly to an anti-automorphism $a
 \mapsto a^*$ of the group algebra $FG$ known as canonical involution of $FG$.
 Let $\mathcal{U}_*(FG)$ be the unitary subgroup
 consisting of the elements of $\mathcal{U}(FG)$ that are inverted by canonical
involution $*$. These
 elements are called unitary units in $FG$. If $F$ is a finite field of
 characteristic 2, then $\mathcal{U}_*(FG)$ coincides with $V_*(FG)$; otherwise,
 it coincides
 with $V_*(FG) \times \langle -1 \rangle$. Here $V_*(FG)$ denotes the set of
 all unitary units in the normalized unit group $V(FG)$. Interest in the group
$U_*(FG)$ arose in algebraic topology and unitary $K$-theory \cite{MR0292913}.  
 
 We are interested in the structure of the unit group $\mathcal{U}(F_2D_{2p})$
and the unitary subgroup $\mathcal{U}_*(F_2D_{2p})$. For a finite abelian
$p$-group $G$ and the field $F$ with $p$ elements, R. Sandling
\cite{MR761637} gave the structure of $V(FG)$ and the structure of $V_*(FG)$ was obtained by A.A.Bovdi and
 A.A.Sakach in \cite{FPG} for a finite field $F$ of
 characteristic $p$. For a field $F$ with two elements and a 2-group $G$
upto
order 16, R. Sandling \cite{MR1136226} gave the presentation for
$V(FG)$. 
 Later on, A.Bovdi and L. Erdei \cite{F2D8} described the structure of the
 unitary
 subgroup $V_*(F_2G)$, where $G$ is a nonabelian group of order 8 and 16. In
 \cite{order} V.
 Bovdi
 and A. L. Rosa computed the order of the unitary subgroup of the
 group
 of units when $G$ is either an extraspecial 2-group or the central product of
 such a group with a cyclic group of order 4, and $F$ is a finite field of
characteristic 2. In the same paper, they
 computed the order of the unitary subgroup $V_*(F G)$, where $G$ is a 2-group
 with an abelian subgroup $A$ of index 2 and an element $b$ such that $b$
 inverts every element in $A$ and the order of $b$ is 2 or 4. V. Bovdi and T.Rozgonyi in
 \cite{unitarybovdi} described the structure of $V_*(F_2G)$, where the order of $b$ is 4.

For a dihedral group $G$ of order $6 $ and $10$ and an arbitrary finite field $F$, the structure of the unit group $\mathcal{U}(FG)$ is described in
\cite{FS3} and \cite{FD10}. Here we give the structure of the unit group
 $\mathcal{U}(F_2D_{2p})$ and the group $\mathcal{U}_*(F_2D_{2p})$. The bicyclic
units of $F_2D_{2p}$ play an important role in finding the structure of unit
group.
 We also study the structure of the group, $\mathcal{B}(F_2D_{2p})$, generated
by bicyclic units of $F_2D_{2p}$.

For an element $g \in G$ of order $n$ , write $\widehat{g} = 1 + g + g^2 +
\cdots + g^{n -1} $. If $g, h \in G, o(g) < \infty$,  then \[u_{g, h} = 1 + (g -
1)h\widehat{g}\] has an inverse $u_{g, h}^{-1} = 1 - (g-1)h\widehat{g}$.
Moreover, $u_{g, h} = 1$ if and only if $h$ is in the normalizer of $\langle g
\rangle$.
The element $u_{g, h}$ is known as a bicyclic unit of the group algebra $FG$ and
the group generated by them is denoted by $\mathcal{B}(FG)$. Observe that all
nontrivial bicyclic units of the group algebra $F_2D_{2p}$ are
 unitary units with respect to canonical involution.

Let $F_q$ be a finite field with $q$ elements and $n$ be a positive integer
co-prime with $q$. If order of $q \ mod \ n $ is $d$, then the set $\{a_0, a_0q,
\ldots a_0q^{d-1}\}$
of elements of $\mathbb{Z}_n$ is said to be $q$-cycle modulo $n$. Further, if
$\alpha$ is a primitive $n$-th root of unity, then the polynomial \[f_{a_0}(x) =
(x -
\alpha^{a_0})(x - \alpha^{a_0q}) \cdots (x - \alpha^{a_0q^{d -1}}),\]
is an irreducible factor of $\phi_n (x)$ over $F_q$  is of degree $d$. Hence,
the number of
irreducible factors of $\phi_n(x)$ over $F_q$ is $\frac{\phi(n)}{d}$. Since $F_2$
is a field
with 2 elements and $D_{2p}$ is the dihedral group of order $2p$, it follows
that if order of $2 \ mod \ p $ is $d$, then the number of irreducible factors of
the cyclotomic polynomial $\phi_p(x)$ over $F_2$ is $\frac{\phi( p)}{d}$ and
each
irreducible factor is of degree $d$.    

\section*{Unit Group of $F_2D_{2p}$}
 \begin{theo}
  Let $ G$ be the dihedral group \[D_{2p} = \langle a, b \ | \ a^p = 1, b^2 = 1,
b^{-1} a b = a^{-1}\rangle.\]
Suppose $V = \langle 1 + \widehat{D_{2p}}\rangle$, where $\widehat{D_{2p}}$
 denotes the sum of all elements of $D_{2p}$. Then \[\mathcal{U}(F_2D_{2p})/V
\cong 
\begin{cases}
\underbrace{GL_2(F_{2^{\frac{d}{2}}}) \times GL_2(F_{2^{\frac{d}{2}}}) \cdots
\times
GL_2(F_{2^{\frac{d}{2}}})}_{{\frac{\phi(p)}{d}}\text{copies}}, & \text{if }
 d \text{ is even } \\
\underbrace{GL_2(F_{2^{d}}) \times GL_2(F_{2^{d}}) \cdots \times
GL_2(F_{2^{d}})}_{{\frac{\phi(p)}{2d}}\text{copies}}, & \text{if
} d \text{ is odd }.
\end{cases}\]\\ and hence $|\mathcal{U}(F_2D_{2p})|= 
\begin{cases}
2 ((2^d-1)(2^d-2^{\frac{d}{2}}))^{\frac{\phi(p)}{d}}, & \text {if } d \text{ is
even}\\
           2 ((2^{2d}-1)(2^{2d}-2^{d}))^{\frac{\phi(p)}{2d}}, & \text {if } d
\text{ is odd }
\end{cases}$\\

 \end{theo}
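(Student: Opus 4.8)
The plan is to decompose the group algebra $F_2D_{2p}$ via the Wedderburn decomposition. The standard approach: first understand $F_2C_p$ where $C_p = \langle a \rangle$ is the cyclic subgroup. Since $p$ is odd and coprime to $2$, $F_2C_p \cong F_2[x]/(x^p - 1) \cong F_2 \times \prod_i F_2[x]/(f_i(x))$, where the $f_i$ are the irreducible factors of $\phi_p(x)$ over $F_2$. By the discussion preceding the theorem, there are $\phi(p)/d$ such factors, each of degree $d$, so $F_2C_p \cong F_2 \times \prod_{i=1}^{\phi(p)/d} F_{2^d}$.

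Then I would extend the action of $b$ (which inverts $a$) to this decomposition. The group algebra $F_2D_{2p}$ is the skew group ring $F_2C_p \rtimes \langle b \rangle$, or equivalently $F_2D_{2p} \cong (F_2C_p)[b]/(b^2-1)$ with $b$ acting by the involution. On the trivial component $F_2$, the involution acts trivially, giving a contribution $F_2[b]/(b^2-1) \cong F_2[b]/(b-1)^2$, whose unit group is $\langle 1 + (b+1)\widehat{a}\rangle$-ish; this is exactly the factor $V$ (more precisely this accounts for the $1+\widehat{D_{2p}}$ piece and a trivial $F_2$ unit). On each nontrivial component $F_{2^d}$, the involution $a \mapsto a^{-1}$ induces a field automorphism; I need to determine its order.

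Here is the key case split, which is **the main obstacle**: the automorphism induced by $a \mapsto a^{-1}$ on the component $F_2[x]/(f_{a_0}(x)) = F_{2^d}$ is $\alpha \mapsto \alpha^{-1}$, i.e. $\alpha \mapsto \alpha^{p-1}$. Whether this lies in the Galois group $\langle x \mapsto x^2 \rangle$ determines the structure. The inversion automorphism corresponds to $-1 \bmod p$; it belongs to the cyclic group $\langle 2 \rangle \le (\mathbb{Z}/p)^\times$ iff $-1 \equiv 2^k$ for some $k$, and one checks this happens precisely when $d = \mathrm{ord}_p(2)$ is \emph{even} (then $-1 \equiv 2^{d/2}$) — wait, more carefully: $-1$ has order $2$ in $(\mathbb{Z}/p)^\times$, so $-1 \in \langle 2\rangle$ iff $2 \mid d$ is necessary, and when $d$ is even one must verify $2^{d/2} \equiv -1$. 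If $d$ is even, then $\{a_0, a_0 2, \dots, a_0 2^{d-1}\}$ is closed under negation, so $a_0$ and $-a_0$ lie in the same $2$-cycle: the involution acts as the nontrivial automorphism $x\mapsto x^{2^{d/2}}$ of $F_{2^d}$ over its fixed field $F_{2^{d/2}}$. Then the corresponding piece of $F_2D_{2p}$ is a crossed product $F_{2^d} \rtimes \langle b\rangle \cong M_2(F_{2^{d/2}})$, with unit group $GL_2(F_{2^{d/2}})$. If $d$ is odd, then $-a_0$ lies in a \emph{different} $2$-cycle, so the two components $F_2[x]/(f_{a_0})$ and $F_2[x]/(f_{-a_0})$ are swapped by the involution; the pair together with $b$ forms $M_2(F_{2^d})$ (a ``induced'' / matrix construction from swapping two copies of a field), with unit group $GL_2(F_{2^d})$. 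The number of such swapped pairs is $\frac12 \cdot \phi(p)/d = \phi(p)/(2d)$.

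Assembling: $\mathcal{U}(F_2D_{2p})$ modulo the unit coming from the trivial/augmentation block (which is exactly $V \cong C_2$, accounting for the factor $2$ in the order and the quotient by $V$) is the direct product over the nontrivial blocks, giving the two cases. Finally I would compute $|GL_2(F_{2^{d/2}})| = (2^d - 1)(2^d - 2^{d/2})$ and $|GL_2(F_{2^d})| = (2^{2d}-1)(2^{2d}-2^d)$, raise to the stated number of copies, and multiply by $|V| = 2$ to get the order formulas. The one step requiring genuine care beyond bookkeeping is the identification of the crossed product $F_{2^d}\rtimes\langle b\rangle$ with $M_2(F_{2^{d/2}})$ in the even case — this uses that the factor set is trivial (since $b^2 = 1$ and $H^2$ of a cyclic group acting on a finite field is controlled by the norm map, which is surjective), so the crossed product is split.
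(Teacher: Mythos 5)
Your argument is correct, but it takes a genuinely different route from the paper. You decompose $F_2C_p$ by the Chinese Remainder Theorem into $F_2\times\prod F_{2^d}$ and then treat $F_2D_{2p}$ as the crossed product (skew group ring) of $\langle b\rangle$ over $F_2C_p$, sorting the simple components of $F_2C_p$ according to whether inversion stabilizes a component ($d$ even, since $-1\equiv 2^{d/2}\bmod p$) or swaps two components ($d$ odd); the stable case gives $F_{2^d}\rtimes\mathrm{Gal}(F_{2^d}/F_{2^{d/2}})\cong M_2(F_{2^{d/2}})$ (splitness being automatic here since $b^2=1$ makes the cocycle trivial, or because Brauer groups of finite fields vanish), and a swapped pair gives $M_2(F_{2^d})$. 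The paper instead writes down explicit two-dimensional representations $T_{\gamma_i}$ of $D_{2p}$ over $F_2(\gamma_i+\gamma_i^{-1})$ (with its two lemmas playing the role of your case split: $\gamma$ and $\gamma^{-1}$ lie in the same irreducible factor iff $d$ is even, and $[F_2(\gamma+\gamma^{-1}):F_2]$ is $d/2$ or $d$ accordingly), computes that the kernel of the resulting algebra map is exactly $F_2\widehat{D_{2p}}$, and finishes by a dimension count. Your approach is more conceptual and generalizes readily (it needs no guessed matrices and identifies $V$ with the unit group of the augmentation block directly); the paper's explicit matrices pay off later, since $T''$, the equivalent diagonal form $S_{\gamma_i}$, and the conjugating matrices $M_i$ are reused in the proof of Theorem 2 on bicyclic units. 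One small point of care in your write-up: the map induced on a stable component is not literally $\alpha\mapsto\alpha^{-1}$ on the whole field (that is not additive); it is the field automorphism determined by $\gamma\mapsto\gamma^{-1}=\gamma^{2^{d/2}}$, i.e.\ the $(d/2)$-th power of Frobenius, which is what your fixed-field computation actually uses.
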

We need the following lemmas:
\begin{lemma}
 Let $p$ be an odd prime such that order of $2 \ mod \ p $ is $d$. If $\zeta $
is a
primitive $p$-th root of unity, then $\zeta$ and $\zeta^{-1}$ are the roots of
the same irreducible factor of $\phi_p(x)$ over $F_2$ if and only if $d$ is
even.
\end{lemma}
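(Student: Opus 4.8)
The plan is to translate the statement about irreducible factors into a purely arithmetic statement about powers of $2$ modulo $p$. Recall from the discussion preceding the theorem that if $\alpha$ is a primitive $p$-th root of unity, the irreducible factor of $\phi_p(x)$ over $F_2$ having $\alpha^{a_0}$ as a root is precisely $f_{a_0}(x)$, whose roots are $\alpha^{a_0}, \alpha^{2a_0}, \ldots, \alpha^{2^{d-1}a_0}$; equivalently, fixing one primitive $p$-th root of unity $\zeta$, two powers $\zeta^i$ and $\zeta^j$ are roots of the same irreducible factor of $\phi_p(x)$ over $F_2$ exactly when $i \equiv 2^k j \pmod p$ for some integer $k \ge 0$. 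Applying this with $i = -1$ and $j = 1$, the assertion that $\zeta$ and $\zeta^{-1}$ lie in the same factor becomes the statement that $2^k \equiv -1 \pmod p$ for some $k \ge 0$.

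First I would set $H = \langle 2 \rangle$, the cyclic subgroup of $(\mathbb{Z}/p\mathbb{Z})^*$ generated by $2$, which by hypothesis has order $d$. Then the condition above is simply $-1 \in H$. Since $p$ is an odd prime, $(\mathbb{Z}/p\mathbb{Z})^*$ is cyclic of even order $p-1$, and $-1$ is its unique element of order $2$ (the equation $x^2 = 1$ has exactly the two solutions $\pm 1$ in the field $\mathbb{Z}/p\mathbb{Z}$). So the whole lemma reduces to: $-1 \in H$ if and only if $|H| = d$ is even.

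Next I would prove both directions from the structure of cyclic groups. If $d$ is even, the cyclic group $H$ of order $d$ contains an element of order $2$, namely $2^{d/2} \bmod p$; since $-1$ is the only element of order $2$ in $(\mathbb{Z}/p\mathbb{Z})^*$, this forces $2^{d/2} \equiv -1 \pmod p$, hence $-1 \in H$ and $\zeta, \zeta^{-1}$ are roots of the same irreducible factor. Conversely, if $-1 \in H$, then $H$ contains an element of order $2$, so $2 \mid |H| = d$, i.e.\ $d$ is even. Combining the two implications gives the equivalence.

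I do not expect a serious obstacle here: the only points needing care are the bookkeeping that the roots of a fixed irreducible factor of $\phi_p(x)$ over $F_2$ form exactly one $2$-cyclotomic coset modulo $p$ (already recorded in the excerpt) and the elementary facts that a finite cyclic group has an element of order $2$ precisely when its order is even, and that $\mathbb{Z}/p\mathbb{Z}$ being a field forces $-1$ to be the unique such element in $(\mathbb{Z}/p\mathbb{Z})^*$. A pleasant byproduct of the argument, worth recording for later use in the theorem, is that when $d$ is even one has the explicit identity $2^{d/2} \equiv -1 \pmod p$.
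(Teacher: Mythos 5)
Your proposal is correct and follows essentially the same route as the paper: both reduce the statement to whether $-1$ lies in the $2$-cyclotomic coset of $1$ modulo $p$, i.e.\ whether $-1\in\langle 2\rangle\subseteq(\mathbb{Z}/p\mathbb{Z})^*$, and then settle this with the order-$2$ element argument. If anything, you make explicit a point the paper leaves implicit in its converse direction, namely that $2^{d/2}\equiv -1 \pmod p$ because $-1$ is the unique element of order $2$ in $(\mathbb{Z}/p\mathbb{Z})^*$.
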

\begin{proof}
Assume that $\zeta$ and $\zeta^{-1}$ are the roots of the same irreducible
factor of $\phi_p(x)$ over $F_2$. It follows that $-1$ and $1$ are in same
$2$-cycle $ \ mod \ p$ and so there exist some $t < d$ such that $ 2^{t}$ $
\equiv$  $-1 \mod p $. 
Hence order of $2^t  \mod p $ is 2. Further, since $2^d
\equiv 1 \mod p$, it implies that $(2^t)^{d} \equiv 1 \mod p$.
Hence $2|d$.\\
 Conversely, let $d$ be even, say $d= 2t$. Then $2^t \equiv -1 \mod
p $ and hence $-1$ and $1$ are in same $2$-cycle  $\ mod \ p$. The result follows.
\end{proof}

\begin{lemma}
 Let $\zeta$ be a primitive $p$-th root of unity. If order of $2 \mod p$ is $d$,
then $[F_2(\zeta + \zeta^{-1}): F_2] =
\frac{d}{2}$, if $d$ is even and $[F_2(\zeta + \zeta^{-1}): F_2] =
d$, if $d$ is odd and in this case $F_2(\zeta + \zeta^{-1})=F_2(\zeta)$.
\end{lemma}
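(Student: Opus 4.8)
The plan is to study the tower of fields $F_2 \subseteq F_2(\zeta + \zeta^{-1}) \subseteq F_2(\zeta)$ and to determine the degree of the top step first. Since $\zeta$ is a root of the quadratic $X^2 - (\zeta + \zeta^{-1})X + 1$, whose coefficients lie in $F_2(\zeta + \zeta^{-1})$, we have $[F_2(\zeta):F_2(\zeta + \zeta^{-1})] \le 2$. On the other hand, the minimal polynomial of $\zeta$ over $F_2$ is one of the irreducible factors of $\phi_p(x)$ described in the introduction, hence has degree $d$, so $[F_2(\zeta):F_2] = d$. By multiplicativity of degrees in a tower, $[F_2(\zeta + \zeta^{-1}):F_2]$ is therefore either $d$ or $d/2$, and the whole problem reduces to deciding which.

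When $d$ is odd, $d/2$ is not an integer, so necessarily $[F_2(\zeta + \zeta^{-1}):F_2] = d$; the tower then forces $F_2(\zeta) = F_2(\zeta + \zeta^{-1})$ (equivalently, $\zeta \in F_2(\zeta + \zeta^{-1})$). This settles the odd case at once.

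When $d$ is even, write $d = 2t$. Here I would use that $F_2(\zeta)/F_2$ is Galois with cyclic Galois group generated by the Frobenius $\sigma : x \mapsto x^2$, which has order $d$. By Lemma~\ref{...} (or simply the argument in its proof), $2^t \equiv -1 \pmod p$, so $\sigma^t(\zeta) = \zeta^{2^t} = \zeta^{-1}$. Since $p$ is an odd prime, $\zeta \ne \zeta^{-1}$, hence $\sigma^t \ne \mathrm{id}$; as $(\sigma^t)^2 = \sigma^d = \mathrm{id}$, the subgroup $\langle \sigma^t\rangle$ has order $2$, so its fixed field $E$ satisfies $[F_2(\zeta):E] = 2$, i.e. $[E:F_2] = t$. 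Now $\sigma^t(\zeta + \zeta^{-1}) = \zeta^{-1} + \zeta = \zeta + \zeta^{-1}$, so $\zeta + \zeta^{-1} \in E$ and hence $[F_2(\zeta + \zeta^{-1}):F_2] \le t = d/2$. Combined with the lower bound $d/2$ coming from the first paragraph, this yields $[F_2(\zeta + \zeta^{-1}):F_2] = d/2$.

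I expect the only real subtlety to be the even case. The quadratic relation by itself only narrows the degree to $d$ or $d/2$, and one genuinely has to exhibit a nontrivial automorphism fixing $\zeta + \zeta^{-1}$ in order to force the drop; this is precisely the point at which Lemma~\ref{...} (equivalently, the congruence $2^{d/2} \equiv -1 \pmod p$) enters. An alternative, Galois-free route would be to check directly that the elements $\zeta^{2^i} + \zeta^{-2^i}$ for $0 \le i < d/2$ are pairwise distinct and constitute the full set of roots of an irreducible polynomial of degree $d/2$ over $F_2$, but the fixed-field argument above is cleaner and reuses Lemma~\ref{...} with no extra bookkeeping.
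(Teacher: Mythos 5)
Your proof is correct, and it shares the paper's overall skeleton — the tower $F_2 \subseteq F_2(\zeta+\zeta^{-1}) \subseteq F_2(\zeta)$, the observation that the top step has degree at most $2$ (the paper phrases this as ruling out $[F_2(\zeta):F_2(\zeta+\zeta^{-1})]=s>2$ by producing a too-small polynomial for $\zeta$, which is your quadratic-relation bound in disguise), and the parity argument that settles the odd case — but you handle the even case by a genuinely different mechanism. The paper invokes Lemma~1 to say that $\zeta$ and $\zeta^{-1}$ are roots of the same degree-$d$ irreducible factor and then asserts, rather tersely, that this yields a polynomial of degree less than $d$ satisfied by $\zeta+\zeta^{-1}$ (implicitly the self-reciprocal-polynomial trick of dividing the palindromic minimal polynomial by $x^{d/2}$ and rewriting it in $x+x^{-1}$), which forces $[F_2(\zeta+\zeta^{-1}):F_2]<d$ and hence $=d/2$. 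You instead use the congruence $2^{d/2}\equiv -1 \pmod p$ (the computational core of Lemma~1's proof) to exhibit the Frobenius power $\sigma^{d/2}$ as a nontrivial automorphism of $F_2(\zeta)$ sending $\zeta\mapsto\zeta^{-1}$, and take its fixed field to get the upper bound $d/2$. Both routes rest on the same number-theoretic input; your Galois-theoretic version makes the degree drop explicit and avoids the unexplained "polynomial of degree $d-1$" step in the paper, at the cost of invoking Galois correspondence where the paper stays at the level of polynomial degrees.
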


\begin{proof}
We claim that $[F_2(\zeta):F_2(\zeta + \zeta^{-1})]= 1 \ \text{or} \ 2.$ If
$[F_2(\zeta):F_2(\zeta + \zeta^{-1})]= s > 2$, then the
degree of the minimal polynomial of $(\zeta + \zeta^{-1})$ over $F_2$
is  $\frac{d}{s}$, which is less than $\frac{d}{2}$. Hence, there is a polynomial
over $F_2$ satisfied by $\zeta$ of degree less than $d$, which is impossible.

 Now, if $d$ is even, then by last lemma, we obtain a polynomial of degree $d$
satisfied by $\zeta$ and $\zeta^{-1}$. It implies that there is a polynomial of
degree $d- 1$ satisfied by $\zeta + \zeta^{-1}$. Hence $[F_2(\zeta + \zeta^{-1})
:
F_2] < d$ and therefore,  $[F_2(\zeta) : F_2(\zeta + \zeta^{-1})] = 2$.
Further, if $d$ is odd, then $[F_2(\zeta):F_2(\zeta + \zeta^{-1})] \neq 2$.
Hence 
$F_2(\zeta)=F_2(\zeta + \zeta^{-1})$ and so $[F_2(\zeta + \zeta^{-1}): F_2] =
d$.
 \end{proof}
\noindent { \it{ Proof of the Theorem.}}
Let the cyclotomic polynomial \[\phi_p(x) = f_1(x)f_2(x)\ldots f_{s}(x)\] be the
product of irreducible factors over $F_2$, where $s = \frac{\phi(p)}{d}$. 
 Assume that $\gamma_i$ is a root of irreducible factor $f_i(x)$ over $F_2$.
Define a matrix representation of $D_{2p}$,  \[T_{\gamma_i} : D_{2p} \rightarrow
M_2(F_2(\gamma_i+ \gamma_i ^{-1}))\] by the assignment

\[a \mapsto  \left(
\begin{array}{cc}
  0 & 1 \\
 1  & \gamma_i + \gamma_i^{-1} \\
\end{array}
\right), \ \ \  b \mapsto  \left(
\begin{array}{cc}
  1 & 0 \\
 \gamma_i + \gamma_i^{-1} & 1 \\
\end{array}
\right)\]
 If $d$ is even, then define $T = T_0 \oplus T_{\gamma_1} \oplus T_{\gamma_2}
\oplus \cdots \oplus T_{\gamma_s} $, the direct sum of the given representations
$T_{\gamma_i}, 1 \leq i\leq s$, and $T_0$ is the trivial representation of
$D_{2p}$ over $F_2$ of degree 1. 

\noindent Suppose $d$ is odd. Lemma (2) implies that $\gamma_i$ and
$\gamma_i^{-1}$ are roots of the different irreducible factors of
$\phi_p(x)$. If $\gamma_i^{-1}$ is a root of $f_j(x)$, then choose $\gamma_j =
\gamma_i^{-1}$. Without loss of generality, assume that $\gamma_1, \gamma_2,
\ldots \gamma_{s'}$ are the roots of distinct irreducible factors of $\phi_p(x)$ such that $\gamma_i \neq
\gamma_j^{-1}$ for $1 \leq i, j \leq s'$.
 Then define $T = \displaystyle T_0 \mathop{\oplus}_{i = 1}^{s'} T_{\gamma_i}$,
the
direct sum of all
distinct matrix representation. 
Therefore, 
\[T: D_{2p} \rightarrow \mathcal{U}(F_2 \oplus M_2( F_2(\gamma_1 +
\gamma_1^{-1})) \oplus
\cdots \oplus M_2( F_2(\gamma_k + \gamma_k^{-1}))) \]
given
by 
 \[a \mapsto \Bigg(1, \left(
\begin{array}{cc}
  0 & 1 \\
 1  & \gamma_1 + \gamma_1^{-1} \\
\end{array}
\right),  \cdots ,\left(
\begin{array}{cc}
  0 & 1 \\
 1  & \gamma_k + \gamma_k^{-1} \\
\end{array}
\right)\Bigg)\ \ \\ \]
 $\mbox{and}\ \ $ \[b \mapsto \Bigg(1, \left(
\begin{array}{cc}
  1 & 0 \\
 \gamma_1 + \gamma_1^{-1} & 1 \\
\end{array}
\right),  \cdots , \left(
\begin{array}{cc}
  1 & 0 \\
 \gamma_k + \gamma_k^{-1} & 1 \\
\end{array}
\right)\Bigg)\] \\
is a group homomorphism, where $k = s = \frac{\phi(p)}{d}$, if $d$ is even and $k = s' = \frac{\phi(p)}{2d}$ if $d$ is odd. 

Extend this group homomorphism $T$ to the algebra
homomorphism 
\[T': F_2D_{2p} \rightarrow F_2 \oplus M_2( F_2(\gamma_1 + \gamma_1^{-1}))
\oplus
\cdots \oplus M_2( F_2(\gamma_k + \gamma_k^{-1})), \] where $M_2(F_2(\gamma_i +
\gamma_i^{-1}))$ is the algebra of $2 \times 2$ matrices over the field
$F_2(\gamma_i + \gamma_i^{-1})$.

\noindent Note that the representation $T_{\gamma_i}$ is equivalent to
$S_{\gamma_i}$,
where
\[S_{\gamma_{i}}(a)= \left(
\begin{array}{cc}
  \gamma_{i} & 0 \\
 0  & \gamma_i^{-1} \\
\end{array}
\right),   \ \ S_{\gamma_{i}}(b)= \left(
\begin{array}{cc}
  0 & 1 \\
 1  & 0 \\
\end{array}
\right).\]
Hence, for $x \in D_{2p}, T_{\gamma_i}(x) = M_i S_{\gamma_i}(x)M_i^{-1}$, where
$M_{i}=\left(
\begin{array}{cc}
  1 & 1 \\
  \gamma_i &   \gamma_i^{-1} \\
\end{array}
\right)$. 
Suppose that $x = \displaystyle \sum_{i= 0}^{p-1}\alpha_{i} a^i +  \sum_{i =
0}^{p -1}\beta_{i}a^ib \in Ker T'$. 
 Then $T'(x)=0$ implies that
\begin{equation}
 \sum_{i=0}^{p-1}\alpha _{i}+
\sum_{i=0}^{p-1}\beta _{i}=0
\end{equation}
 and 
 for $1 \leq j \leq k, \gamma_{j}$ and $\gamma_{j}^{-1}$ satisfies the
polynomials $g(x)=\alpha_{0}+\alpha_{1}x+ \cdots + \alpha_{p-1}x^{p-1}$ and
 $h(x)=\beta_{0}+\beta_{1}x+ \cdots + \beta_{p-1}x^{p-1}$ over $F_{2}$. 
It follows that  irreducible factors of $\phi_p(x)$ are factors of $g(x)$
and $h(x)$. Further, since all factors are co-prime, it follows that
$\phi_{p}(x)$ divides $g(x) \ \text{and} \  h(x)$ and hence $\alpha_i =
\alpha_j, \text{and}  \ \beta_i =
\beta_j , 0 \leq i, j \leq {p-1}$.
Thus, from equation $(1)$, we have 
$\alpha_i = \beta_i, 0 \leq i \leq p-1$ and therefore $Ker T' =
F_2\widehat{D_{2p}}$. 

Further, the dimension of $(F_2D_{2p}/F_2\widehat{D_{2p}})$ and $  \displaystyle
F_2 \mathop{\oplus}_{i=1}^{k} M_2( F_2(\gamma_i +
\gamma_i^{-1}))$ over $F_2$ are same. Hence
\[F_2D_{2p}/F_2\widehat{D_{2p}} \cong \displaystyle F_2
\mathop{\oplus}_{i=1}^{k} M_2(
F_2(\gamma_i +
\gamma_i^{-1})).\] Since $F_2\widehat{D_{2p}}$
is nilpotent, $T'$ induces an epimorphism
\[T'':\mathcal{U}(F_2D_{2p}) \rightarrow \prod_{i=1}^{k} GL_2( F_2(\gamma_i +
\gamma_i^{-1}))\] such that $ker T'' =
\langle 1+\widehat{D_{2p}}\rangle$. Hence 
\[\mathcal{U}(F_2D_{2p})/\langle 1+\widehat{D_{2p}}\rangle \cong \prod_{i=1}^{k}
 GL_2(F_2(\gamma_i + \gamma_i^{-1}))\]
  and therefore the result follows.
\section*{Structure of $\mathcal{B}(F_2D_{2p})$}

\begin{theo}
 Let p be an odd prime such that order of $ 2 \mod p \text{ is } d$. Then, the group
generated by the bicyclic units, i.e.,
 \[\mathcal{B}(F_2D_{2p}) \cong 
 \begin{cases}
\underbrace{SL_2(F_{2^{\frac{d}{2}}}) \times SL_2(F_{2^{\frac{d}{2}}})\cdots
\times
SL_2(F_{2^{\frac{d}{2}}})}_{{\frac{\phi(p)}{d} }\text{copies}}, & \text{if } d
\text{ is even} \\
\underbrace{SL_2(F_{2^{d}}) \times SL_2(F_{2^{d}}) \cdots \times
SL_2(F_{2^{d}})}_{{\frac{\phi(p)}{2d}}\text{copies}}, & \text{if
} d \text{ is odd}
\end{cases} \] where
  $SL_2(F)$ is the special linear group of degree 2 over $F$.
\end{theo}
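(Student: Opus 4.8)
The plan is to transport the bicyclic units through the epimorphism constructed in the proof of Theorem~1. Write $T''\colon\mathcal U(F_2D_{2p})\twoheadrightarrow\prod_{i=1}^{k}GL_2(K_i)$ with $K_i=F_2(\gamma_i+\gamma_i^{-1})$ and $\ker T''=V=\langle 1+\widehat{D_{2p}}\rangle$, where $|V|=2$ and, by Lemma~2, either $d$ is even with $k=\phi(p)/d$ and $K_i\cong F_{2^{d/2}}$, or $d$ is odd with $k=\phi(p)/(2d)$ and $K_i\cong F_{2^{d}}$. It suffices to show that $T''$ carries $\mathcal B(F_2D_{2p})$ isomorphically onto $\prod_{i=1}^{k}SL_2(K_i)$. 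For the inclusion $T''(\mathcal B(F_2D_{2p}))\subseteq\prod_i SL_2(K_i)$: since $\widehat g(g-1)=0$, the element $(g-1)h\widehat g$ squares to zero, so each $T_{\gamma_i}(u_{g,h})=I+T_{\gamma_i}\big((g-1)h\widehat g\big)$ is a unipotent perturbation of the identity and hence has determinant $1$; consequently every bicyclic unit, and the subgroup they generate, lies in $\prod_i SL_2(K_i)$.

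For the reverse inclusion I would compute the images of two families of bicyclic units. Using the representation $S_{\gamma_i}$ equivalent to $T_{\gamma_i}$, a direct calculation gives $S_{\gamma_i}(u_{b,a^{j}})=I+(\gamma_i^{j}+\gamma_i^{-j})\left(\begin{smallmatrix}1&1\\1&1\end{smallmatrix}\right)$ and $S_{\gamma_i}(u_{ab,a^{j}})=I+(\gamma_i^{j}+\gamma_i^{-j})\left(\begin{smallmatrix}1&\gamma_i\\\gamma_i^{-1}&1\end{smallmatrix}\right)$, both square-zero unipotents. The crucial algebraic input is that the $F_2$-subalgebra $F_2[a+a^{-1}]$ of $F_2[a]$ surjects, via $a+a^{-1}\mapsto(\gamma_i+\gamma_i^{-1})_{i}$, onto $\prod_{i=1}^{k}K_i$; this follows from Lemmas~1 and~2, which force the minimal polynomials of $\gamma_1+\gamma_1^{-1},\dots,\gamma_k+\gamma_k^{-1}$ over $F_2$ to be pairwise distinct, hence pairwise coprime, whence the claim by the Chinese Remainder Theorem. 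Therefore the vectors $\big((\gamma_i^{j}+\gamma_i^{-j})_{i}\big)_{j\ge 1}$ span $\prod_i K_i$ over $F_2$, so $\langle u_{b,a^{j}}:j\rangle$ and $\langle u_{ab,a^{j}}:j\rangle$ map under $T''$ onto the \emph{full} direct products $\prod_i L_i$ and $\prod_i U_i'$, where (after conjugating to the $T_{\gamma_i}$-picture) $L_i,U_i'\le SL_2(K_i)$ are the order-$|K_i|$ unipotent subgroups afforded by the two matrices above. Fixing an index $i_0$, the product $\prod_i L_i$ contains every element arbitrary in the $i_0$-coordinate and trivial elsewhere, and likewise $\prod_i U_i'$; hence $T''(\mathcal B(F_2D_{2p}))$ contains $\langle L_{i_0},U_{i_0}'\rangle$ placed in coordinate $i_0$. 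Since $\gamma_{i_0}\neq 1$, the subgroups $L_{i_0}$ and $U_{i_0}'$ fix distinct points of $\mathbb P^1(K_{i_0})$, so they lie in no common Borel subgroup and therefore $\langle L_{i_0},U_{i_0}'\rangle=SL_2(K_{i_0})$. Letting $i_0$ vary gives $T''(\mathcal B(F_2D_{2p}))\supseteq\prod_i SL_2(K_i)$, hence equality.

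It remains to show $T''$ is injective on $\mathcal B(F_2D_{2p})$, i.e.\ $1+\widehat{D_{2p}}\notin\mathcal B(F_2D_{2p})$. Let $\psi\colon F_2D_{2p}\to M_2(F_2)$ be the algebra homomorphism extending $a\mapsto I$, $b\mapsto\left(\begin{smallmatrix}0&1\\1&0\end{smallmatrix}\right)$ (the regular representation of $D_{2p}/\langle a\rangle\cong C_2$). If $g$ is a power of $a$ then $\psi(g-1)=0$; if $g=a^{i}b$ then $\psi(g-1)=\psi(\widehat g)=\left(\begin{smallmatrix}1&1\\1&1\end{smallmatrix}\right)$, which has square $0$ and commutes with $\psi(h)$ (being $I$ or $\left(\begin{smallmatrix}0&1\\1&0\end{smallmatrix}\right)$). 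In either case $\psi\big((g-1)h\widehat g\big)=\psi(g-1)\,\psi(h)\,\psi(\widehat g)=0$, so $\psi(u_{g,h})=I$ for every bicyclic unit and $\psi$ is trivial on $\mathcal B(F_2D_{2p})$. But $\psi(1+\widehat{D_{2p}})=\left(\begin{smallmatrix}0&1\\1&0\end{smallmatrix}\right)\neq I$, because in $1+\widehat{D_{2p}}$ the coefficients on the powers of $a$ sum to $p-1\equiv 0$ while those on the elements $a^{i}b$ sum to $p\equiv 1\pmod 2$. Hence $\mathcal B(F_2D_{2p})\cap V=1$, $T''$ is injective on $\mathcal B(F_2D_{2p})$, and combining with the previous paragraph yields $\mathcal B(F_2D_{2p})\cong\prod_{i=1}^{k}SL_2(K_i)$, which is exactly the asserted structure.

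The decisive step is the computation of $T''(\mathcal B(F_2D_{2p}))$: a priori this image is only a \emph{subdirect} product of the $SL_2(K_i)$, and since these factors may be mutually isomorphic no Goursat-type shortcut is available. What makes the argument go through is that a single family $\{u_{b,a^{j}}\}_{j}$ already maps onto the \emph{unrestricted} direct product of unipotent subgroups — which reduces to the elementary fact that $F_2[a+a^{-1}]$ surjects onto $\prod_i K_i$, itself a consequence of Lemmas~1 and~2 — after which one only needs the classical observation that two unipotent subgroups of $SL_2$ over a field lying in no common Borel subgroup generate the whole group. The injectivity step, though brief, is indispensable: without it the structure of $\mathcal B(F_2D_{2p})$ would be pinned down only up to the central factor $V$ of order $2$.
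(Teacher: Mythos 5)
Your proposal is correct, and its skeleton is the same as the paper's: both pass through the epimorphism $T''$ of Theorem~1, observe that bicyclic units map into $\prod_i SL_2(K_i)$ because $(g-1)h\widehat{g}$ squares to zero, obtain surjectivity from the two families $u_{b,a^j}$ and $u_{ab,a^j}$ together with the pairwise coprimality of the minimal polynomials of the $\gamma_i+\gamma_i^{-1}$ (the paper's Lemma~4, which you repackage as the Chinese Remainder Theorem for $F_2[x]\to\prod_i K_i$), and prove injectivity by a homomorphism that kills every bicyclic unit (your $\psi$ is the paper's map $f'$ onto $F_2\langle g\rangle$ followed by the regular representation; the paper applies it to $b$ and then uses $b(1+\widehat{D_{2p}})=u_{b,a}\cdots u_{b,a^l}$, while you evaluate it directly at $1+\widehat{D_{2p}}$ --- equivalent). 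Where you genuinely diverge is the endgame of surjectivity: the paper conjugates to the $S$-picture and does explicit coefficient bookkeeping to produce preimages of the standard elementary matrices concentrated in one coordinate, whereas you treat the images of the two families as full root subgroups fixing distinct points of $\mathbb{P}^1(K_{i_0})$ and invoke generation of $SL_2$; this is cleaner and avoids the paper's case analysis.

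Two small points in that endgame should be made explicit. First, the statement that two unipotent subgroups of $SL_2(K)$ lying in no common Borel generate $SL_2(K)$ is false for proper subgroups of root subgroups (subfield subgroups give counterexamples); what you actually use, and what is true, is that two \emph{full} root subgroups with distinct fixed points are simultaneously conjugate to the standard upper and lower unipotent groups by $2$-transitivity of $SL_2(K)$ on $\mathbb{P}^1(K)$, so the fullness (order exactly $|K_{i_0}|$) must be recorded. Second, fullness --- your claim that the vectors $\big((\gamma_i^{j}+\gamma_i^{-j})_i\big)_j$ span $\prod_i K_i$ --- does not follow from CRT surjectivity of $F_2[a+a^{-1}]$ alone, because the image of that unital subalgebra is spanned by these vectors together with the all-ones vector; you need the one-line supplement $\sum_{j=1}^{(p-1)/2}(\gamma_i^{j}+\gamma_i^{-j})=\sum_{m=1}^{p-1}\gamma_i^{m}=1$, which places the all-ones vector in the span. (This is exactly the constant-coefficient case $b_0=1$ that the paper handles via $b(1+\widehat{D_{2p}})\in\mathcal{B}(F_2D_{2p})$.) With these two lines added, your argument is complete and yields precisely the stated structure of $\mathcal{B}(F_2D_{2p})$.
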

 We need the following lemmas:
\begin{lemma}
$ D_{2p} \cap \mathcal{B}(F_2D_{2p})= \langle a \rangle$.
\end{lemma}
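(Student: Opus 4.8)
The plan is to prove the two inclusions $\langle a\rangle\subseteq\mathcal{B}(F_2D_{2p})$ and $\mathcal{B}(F_2D_{2p})\cap D_{2p}\subseteq\langle a\rangle$ separately.

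For the second inclusion I would use the $F_2$-algebra epimorphism $\pi\colon F_2D_{2p}\to F_2[D_{2p}/\langle a\rangle]=F_2C_2$, where $C_2=\langle\bar b\rangle$ is generated by the image of $b$. Since $F_2C_2=F_2[\bar b]/(\bar b^{2}-1)$ is local with $\mathcal{U}(F_2C_2)=\{1,\bar b\}$, $\pi$ restricts to a homomorphism $\mathcal{U}(F_2D_{2p})\to\{1,\bar b\}$, under which a group element maps to $1$ exactly when it lies in $\langle a\rangle$ (an element of $\langle a\rangle b$ maps to $\bar b$). On the other hand every bicyclic unit is killed: if $g\in\langle a\rangle$ then $u_{g,h}=1$ already, and if $g$ is a reflection then $\widehat g=1+g$, so $\pi(\widehat g)=1+\bar b=\pi(g)-1$ and hence $\pi(u_{g,h})=1+(\pi(g)-1)\pi(h)\pi(\widehat g)=1+\pi(h)(1+\bar b)^{2}=1$, because $(1+\bar b)^{2}=0$ in $F_2C_2$. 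Thus $\mathcal{B}(F_2D_{2p})$ lies in the kernel of $\pi|_{\mathcal{U}}$, which meets $D_{2p}$ in precisely $\langle a\rangle$.

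For the first inclusion it suffices, $p$ being prime, to write one nontrivial power of $a$ as a product of bicyclic units. The starting point is the identity, verified by a direct computation using $b^{-1}ab=a^{-1}$, that for a reflection $g$ (so $\widehat g=1+g$) and $h=a^{k}$ one has $u_{g,a^{k}}=1+s_k(1+g)$, where $s_k:=a^{k}+a^{-k}$ is central in $F_2D_{2p}$. Since $g^{2}=1$ forces $(1+g)^{2}=0$, units of this form with a common $g$ multiply by adding the coefficients, so
\[\prod_{k=1}^{(p-1)/2}u_{g,a^{k}}=1+\Bigl(\sum_{k=1}^{(p-1)/2}s_k\Bigr)(1+g)=1+(1+\widehat a)(1+g)\in\mathcal{B}(F_2D_{2p}),\]
where $\widehat a=1+a+\cdots+a^{p-1}$ and we used $\sum_{k=1}^{(p-1)/2}s_k=\widehat a+1$. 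Now set $t:=1+\widehat a$; it is central and idempotent, and $\widehat a(1+a^{-1})=0$ gives $t(1+a^{-1})=1+a^{-1}$. Taking the last display with $g=b$ and with $g=ab$ and multiplying the two resulting elements of $\mathcal{B}(F_2D_{2p})$,
\[\bigl(1+t(1+b)\bigr)\bigl(1+t(1+ab)\bigr)=1+t\bigl[(1+b)+(1+ab)+(1+b)(1+ab)\bigr]=1+t(1+a^{-1})=a^{-1},\]
the bracket being simplified by $bab=a^{-1}$. Hence $a^{-1}\in\mathcal{B}(F_2D_{2p})$, so $\langle a\rangle\subseteq\mathcal{B}(F_2D_{2p})$, which together with the second inclusion proves the lemma.

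The real difficulty lies entirely in the first inclusion: each nontrivial bicyclic unit of $F_2D_{2p}$ is an involution of the shape $1+(\text{square-zero element})$, so a generic product of bicyclic units is far from lying in $D_{2p}$, and one has to hit on the right combination — the central idempotent $1+\widehat a$, which annihilates $\widehat a$ and collapses $\bigl(1+t(1+b)\bigr)\bigl(1+t(1+ab)\bigr)$ down to $a^{-1}$, is what makes it work. Everything else reduces to routine manipulation in the group algebra together with the elementary structure of $\mathcal{U}(F_2C_2)$.
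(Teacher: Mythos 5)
Your proof is correct and takes essentially the same route as the paper: your quotient map $\pi$ onto $F_2[D_{2p}/\langle a\rangle]\cong F_2C_2$ is exactly the paper's homomorphism $f'$ killing all bicyclic units and hence excluding the reflections, and your products $\prod_k u_{b,a^k}$ and $\prod_k u_{ab,a^k}$ are the very elements the paper writes as $b(1+\widehat{D_{2p}})$ and $ab(1+\widehat{D_{2p}})$, whose product gives a generator of $\langle a\rangle$. The only cosmetic differences are that you organize the computation through the central idempotent $1+\widehat a$ instead of $1+\widehat{D_{2p}}$ and dispense with the paper's preliminary remark that the intersection is normal in $D_{2p}$.
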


\begin{proof}
  Since $D_{2p}$ is in the normalizer of $\mathcal{B}(F_2D_{2p})$, it implies
that
 $\mathcal{B}(F_2D_{2p}) \cap D_{2p}$ is a normal subgroup of $D_{2p}$.
 Therefore, it is either a trivial subgroup or $\langle a \rangle$.\\
We claim that $ b \notin D_{2p} \cap \mathcal{B}(F_2D_{2p})$. For that we
  define a map \[f : D_{2p} \rightarrow  \langle g \ | \ g^2 = 1\rangle\] such
that
  \[f(a^i) = 1 \ \textrm{and} \ f(a^i b) = g , 0 \leq i \leq p -1.\] Note that
 it is a group homomorphism and  we can extend this linearly to an algebra homomorphism
 $f'$ from $F_2D_{2p}$ to $F_2 \langle g \rangle$. It is easy to see that the
image of the bicyclic units under $f'$ is 1. If $b \in
 \mathcal{B}(F_2D_{2p})$, then $f'(b)= 1$, which is not possible.
 Therefore, $b \notin \mathcal{B}(F_2D_{2p})$.
This shows that $D_{2p} \cap \mathcal{B}(F_2D_{2p}) \neq D_{2p}$.\\
Also observe that \[u_{ab, a}u_{ab, a^2} \cdots u_{ab,a^l}=
ab(1 + \widehat{D_{2p}})\]  \[\text{and}\ \ u_{b, a}u_{b, a^2} \cdots u_{b,
a^l}=b(1 +
\widehat{D_{2p}}),\]
 where $u_{a^jb, a^i} = 1 + (a^i + a^{-i})(1 + a^jb)$ is a bicyclic unit of
the group algebra $F_2D_{2p}$ and $l = \frac{p-1}{2}$. It implies that
\[a=u_{ab, a}u_{ab, a^2} \cdots
u_{ab,a^l} u_{b, a^l} \cdots u_{b, a^2}u_{b, a}.\] Hence,
  $ D_{2p} \cap
\mathcal{B}(F_2D_{2p})= \langle a \rangle$.
\end{proof}

\begin{lemma}
 For  $1 \leq i \leq k$, let $\gamma_i $  be the primitive $p$-th root of unity
described in  the proof of the Theorem 1. Then the minimal polynomials of
$\gamma_i +\gamma_i^{-1}, 1 \leq i \leq k$,  are distinct.
\end{lemma}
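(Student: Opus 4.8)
The plan is to recast the statement as a question about Frobenius orbits and then to read off the conclusion from the way the roots $\gamma_i$ were chosen in the proof of Theorem~1.

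First I would recall the elementary fact that two elements of an extension of $F_2$ have the same minimal polynomial over $F_2$ exactly when they lie in a single orbit of the Frobenius map $x\mapsto x^2$ (the roots of an irreducible polynomial over $F_2$ form one such orbit). Suppose then that $\gamma_i+\gamma_i^{-1}$ and $\gamma_j+\gamma_j^{-1}$ have the same minimal polynomial over $F_2$. Then $\gamma_j+\gamma_j^{-1}=(\gamma_i+\gamma_i^{-1})^{2^t}$ for some $t\geq 0$, and since the Frobenius is additive in characteristic $2$ the right-hand side equals $\gamma_i^{2^t}+\gamma_i^{-2^t}$. Writing $y=\gamma_i^{2^t}$, which is again a primitive $p$-th root of unity, the relation $\gamma_j+\gamma_j^{-1}=y+y^{-1}$ shows that $\gamma_j$ is a root of $X^2+(y+y^{-1})X+1=(X+y)(X+y^{-1})$, hence $\gamma_j\in\{\gamma_i^{2^t},\gamma_i^{-2^t}\}$; that is, $\gamma_j$ is a Frobenius conjugate of $\gamma_i$ or of $\gamma_i^{-1}$.

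Next I would split on the parity of $d$. If $d$ is even, then by Lemma~2 both $\gamma_i$ and $\gamma_i^{-1}$ are roots of the same irreducible factor $f_i(x)$ of $\phi_p(x)$; since Frobenius permutes the roots of $f_i$, every element of $\{\gamma_i^{2^t},\gamma_i^{-2^t}\}$ is a root of $f_i$, so $\gamma_j$ is a root of $f_i$, whence $f_j=f_i$ and $i=j$. If $d$ is odd, recall from the proof of Theorem~1 that $\gamma_1,\dots,\gamma_{s'}$ were chosen so that their minimal polynomials $f_1,\dots,f_{s'}$ are pairwise distinct and contain exactly one member of each pair $\{f,\bar f\}$ of irreducible factors of $\phi_p(x)$, where $\bar f$ is the factor whose roots are the inverses of the roots of $f$; here $\bar f\neq f$ by Lemma~2, as $d$ is odd. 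In this case $\gamma_i^{2^t}$ is a root of $f_i$ while $\gamma_i^{-2^t}$ is a root of $\bar f_i$, so $f_j\in\{f_i,\bar f_i\}$. Since $f_i$ is among the chosen factors, its partner $\bar f_i$ is not, and therefore $f_j=f_i$, i.e.\ $i=j$. In both cases the minimal polynomials of $\gamma_1+\gamma_1^{-1},\dots,\gamma_k+\gamma_k^{-1}$ are pairwise distinct.

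The only side checks I anticipate are the two elementary facts used above: that squaring is additive in characteristic $2$, and that $-1=1$ in $F_2$ (for the factorization of the quadratic). Lemma~3 serves as a consistency check, since it gives the degree $\frac{d}{2}$ (resp.\ $d$) of the minimal polynomial of $\gamma_i+\gamma_i^{-1}$, matching the size of the relevant Frobenius orbit, but this degree count is not logically needed. I do not expect a genuine obstacle: once the problem is phrased in terms of Frobenius orbits, the conclusion is forced by Lemma~2 and by the selection of the $\gamma_i$. The step that needs care is the odd-$d$ case, where it is essential that $\gamma_1,\dots,\gamma_{s'}$ come from a choice of one representative per inversion pair of irreducible factors, not from an arbitrary list of primitive $p$-th roots of unity.
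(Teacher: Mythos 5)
Your proof is correct, but it runs along a genuinely different line from the paper's. You work on the level of Galois/Frobenius orbits: from the assumption that $\gamma_i+\gamma_i^{-1}$ and $\gamma_j+\gamma_j^{-1}$ share a minimal polynomial you get $\gamma_j+\gamma_j^{-1}=\gamma_i^{2^t}+\gamma_i^{-2^t}$, and the factorization $X^2+(y+y^{-1})X+1=(X+y)(X+y^{-1})$ in characteristic $2$ then forces $\gamma_j\in\{\gamma_i^{2^t},\gamma_i^{-2^t}\}$, after which Lemma 2 (for $d$ even) or the choice of one representative per inversion pair of irreducible factors (for $d$ odd) finishes the argument. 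The paper instead argues by degree counting and coprimality: using Lemma 3, a common minimal polynomial of the two traces would have degree $\frac{d}{2}$ (resp.\ $d$), and substituting $x+x^{-1}$ and clearing denominators produces a polynomial of degree $d$ (resp.\ $2d$) annihilating $\gamma_i,\gamma_j$ (resp.\ $\gamma_i^{\pm1},\gamma_j^{\pm1}$), which is impossible since the product of their pairwise coprime degree-$d$ minimal polynomials would have to divide it. Your route is a bit more structural and bypasses the degree computation of Lemma 3 entirely, pinpointing exactly which roots could share a trace; the paper's route is shorter on bookkeeping once Lemma 3 is in hand. Both arguments ultimately rest on Lemma 2 and, in the odd-$d$ case, on the fact that the $\gamma_i$ were chosen one per inversion pair of irreducible factors, which you correctly single out as the step needing care.
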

\begin{proof}
Suppose that $d$ is even. If $\zeta$ is a primitive $p$-th root of unity, then $
[F_2(\zeta  + \zeta^{-1}): F_2 ] = \frac{d}{2}$. Assume that 
\[f(x) = a_0 + a_1x + \cdots + a_{\frac{d}{2}}x^{\frac{d}{2}} \] is the minimal
polynomial over $F_2$ satisfied by both $\gamma_i + \gamma_i^{-1}$ and $\gamma_j
+ \gamma_j^{-1}$ for $i \neq j.$ It implies that there is a polynomial of degree
$d$ over $F_2$ satisfied by both $\gamma_i$ and $\gamma_j$. This is a
contradiction, because the minimal polynomials of $\gamma_i$ and $\gamma_j$ over
$F_2$ are co-prime. Hence the result follows.

\noindent Further, if $d$ is odd, then $[F_2(\zeta + \zeta^{-1}):F_2] = d$. Let
$f(x)$ be the minimal polynomial over $F_2$ satisfied by both $\gamma_i +
\gamma_i^{-1}$ and $\gamma_j + \gamma_j^{-1}$ for $i \neq j$. It follows that
there is a polynomial $g(x)$ of degree $2d$ over $F_2$ satisfied by $\gamma_i,
\gamma_j, \gamma_j$ and $\gamma_j^{-1}$. Since $d$ is odd, the minimal
polynomials of $\gamma_i^{\pm 1}$ and $\gamma_j^{\pm 1}$over $F_2$  are
co-prime.
Hence the product of the minimal polynomials divides $g(x)$, which is a contradiction.
This completes the proof of the lemma.
\end{proof}
\noindent \textbf{Proof of the theorem}: Observe that the image of the bicyclic
units of
the group algebra $F_2D_{2p}$  are in $\displaystyle \prod_{i = 1}^k
SL_2(F_2(\gamma_i +
\gamma_i^{-1}))$ under the map $T''$. Suppose $T'''$ is the restricted map of
$T''$ to $\mathcal{B}(F_2D_{2p})$,  
 i.e.,
\[T''':\mathcal{B}(F_2D_{2p}) \rightarrow 
\prod_{i = 1}^k SL_2(F_2(\gamma_i + \gamma_i^{-1}))\] such that
$T'''(x)=T''(x)$ for $ x \in \mathcal{B}(F_2D_{2p})$. Then $kerT''' \leq
kerT''$. Since $b \notin \mathcal{B}(F_2D_{2p})$ and $ u_{b, a}u_{b, a^2} \cdots
u_{b, a^l} = b(1 + \widehat{D_{2p}})$, it follows that $ kerT'''=\{ 1 \}$.

 \noindent Further, it is known that \[ SL_2(F_2(\zeta+\zeta^{-1})) = \Bigg
\langle \left(
 \begin{array}{cc}
   1 & u \\
  0 & 1 \\
\end{array}
 \right) 
  , \ \ \left(
 \begin{array}{cc}
   1 & 0 \\
  v & 1 \\
 \end{array}
 \right) \ \Bigg | \  u, v \in F_2(\zeta+\zeta^{-1})  \Bigg \rangle .\] \\To
prove $T'''$ is
onto, it is
sufficient to prove that the elements of the form 
\[ \Bigg(\left(
\begin{array}{cc}
  1 & 0 \\
 0  & 1 \\
\end{array}
\right), \cdots, \left(
\begin{array}{cc}
  1 & 0 \\
 u_i  & 1 \\
\end{array}
\right), \cdots \left(
\begin{array}{cc}
  1 & 0 \\
 0  & 1 \\
\end{array}
\right) \Bigg) \\ \] \\ and
\[ \Bigg(\left(
\begin{array}{cc}
  1 & 0 \\
 0  & 1 \\
\end{array}
\right), \cdots , \left(
\begin{array}{cc}
  1 & v_i \\
 0  & 1 \\
\end{array}
\right), \cdots \left(
\begin{array}{cc}
  1 & 0 \\
 0  & 1 \\
\end{array}
\right) \Bigg) \\ \]   \\
where $u_i,\ v_i \in F_2(\gamma_i + \gamma_i^{-1})$ have a preimage in
$\mathcal{B}(F_2D_{2p})$ under $T'''$  for all $1 \leq i \leq k $.

Assume that $y_i= \displaystyle \prod_{\stackrel{j=1}{j \neq
i}}^kf_j'(\gamma_i+\gamma_i^{-1})$, such that $f_j'(x)$ is the minimal
polynomial of $\gamma_j + \gamma_j^{-1}$ over $F_2$. If $g(x)=\displaystyle
\prod_{\stackrel{j=1}{j \neq
i}}^kf_j'(x)$, then $g(\gamma_j+\gamma_j^{-1})=0$ for $1 \leq j \leq k, \ j
\neq i$ and $g(\gamma_i+\gamma_i^{-1})= y_i$, a nonzero element of
$F_2(\gamma_i+\gamma_i^{-1})$.

Take $\{y_i, y_i(\gamma_i+\gamma_i^{-1}), \cdots ,
y_i(\gamma_i+\gamma_i^{-1})^{t-1}\}$
as a basis of $F_2(\gamma_i+\gamma_i^{-1})$ over $F_2$, where
$t=[F_2(\gamma_i+\gamma_i^{-1}):F_2]$. Therefore, any element $u_i $  of
$F_2(\gamma_i+\gamma_i^{-1})$ can be written as $\displaystyle u_i= y_i\sum_{j = 0}^{t
-1}\alpha_j (\gamma_i+\gamma_i^{-1})^{j}$. Assume that $u'(x) = g(x)u(x)$, where
$u(x)=\alpha_0+\alpha_1x+ \cdots +\alpha_{t-1} x^{t-1} \in F_2[x]$ and therefore $u'(x) \in F_2[x]$. It is clear that
$u'(\gamma_i+\gamma_i^{-1})=u_i$ and $u'(\gamma_j+\gamma_j^{-1})=0$ for $1 \leq
j \leq k,\ j \neq i$. Further, if $u'(x)= a_0 + a_1 x+ \cdots + a_m x^{m}$, then the generator $X_i$, whose i-th
component is $\left(
\begin{array}{cc}
  1 & 0 \\
 u_i  & 1 \\
\end{array}
\right) $ and other components are the identity matrix, can be written as $X_i=
e_0e_1 \cdots e_m$. Here $e_j$ is an element of $\displaystyle \prod_{i = 1}^kSL_2(F_2(\gamma_i + \gamma_i^{-1}))$ such that the $r$-th component of $e_j$ is $\left(
\begin{array}{cc}
  1 & 0 \\
 a_j(\gamma_r + \gamma_r^{-1})^j & 1 \\
\end{array}
\right)$, for $0 \leq j \leq m, \ 1 \leq r \leq k$.
Now we will prove that the preimage of $e_j$ is in $\mathcal{B}(F_2D_{2p})$
under the map $T'''$.

If $a_j=0$, then it is trivial. Now assume that $a_j=1$.
Suppose that $M = (M_1, M_2, \ldots M_k)$, where $M_r= \left(
\begin{array}{cc}
  1 & 1 \\
 \gamma_r & \gamma_r^{-1} \\
\end{array}
\right) $. If \[(\gamma_r + \gamma_r^{-1})^{j-1} = b_0 + \sum_{s = 1}^{l-1}
b_s(\gamma_r^s + \gamma_r^{-s} ), \] where $b_i \in F_2$  then the $r$-th component of $M^{-1}e_jM$  
is \[\left(
\begin{array}{cc}
  1 & 0 \\
 0 & 1 \\
\end{array}
\right) + (b_0 + \sum_{s = 1}^{l-1} b_s(\gamma_r^s + \gamma_r^{-s} ))\left(
\begin{array}{cc}
  1 & 1 \\
 1 & 1 \\
\end{array}
\right).\]
 By extending the matrix representation $S_{\gamma_r}$ to an algebra
homomorphism
over $F_2$, we obtain that this element is an image of $\alpha$ under the algebra
homomorphism $S_{\gamma_r}$, where $ \displaystyle \alpha = 1 + (b_0 + \sum_{s =
1}^{l-1} b_s(a^s + a^{-s} ))(1+b)  $. 
Since $S_{\gamma_r}(\alpha) =
M_r^{-1}T_{\gamma_r}(\alpha)M_r$, it follows that $e_j = T''(\alpha)$.
If $b_0 = 0$, then  \[\alpha = \prod_{s = 1}^{l-1} (1 + b_s(a^s + a^{-s})(1 +
b)),
\] is product of bicyclic units of the group algebra $F_2D_{2p}$. Now if $b_0 =
1$, then \[\alpha = b \prod_{s = 1}^{l-1} (1 + b_s(a^s + a^{-s})(1 + b)).\]
Since $b (1 + \widehat{D_{2p}})= u_{b, a}\ldots u_{b, a^l}$ and $1 +
\widehat{D_{2p}}$ is in the kernel of $T''$, it implies that $\alpha(1 +
\widehat{D_{2p}})$ is the preimage of $e_j$ under the map $T'''$. 

 Therefore, the preimage of $X_i$ is in
$\mathcal{B}(F_2D_{2p})$. Similarly we can prove the same thing for other generators. 
Then \[\displaystyle \prod_{i = 1}^k
SL_2(F_2(\gamma_i +
\gamma_i^{-1})) =
T'''(\mathcal{B}(F_2D_{2p})).\]
Hence 
\[\mathcal{B}(F_2D_{2p}) \cong
\displaystyle \prod_{i = 1}^k
SL_2(F_2(\gamma_i +
\gamma_i^{-1}))\]  and so \[|\mathcal{B}(F_2D_{2p})| 
= \begin{cases}
(2^{\frac{d}{2}}(2^{d}-1))^k & \text{if } d \text{ is even }\\
(2^d(2^{2d}-1))^k & \text{if } d \text{ is odd}.
\end{cases}\]\\

\section*{The structure of Unitary Subgroup and Unit Group}
\begin{theo} 
  The unitary subgroup $\mathcal{U}_*(F_2D_{2p})$ of the group algebra
 $F_2D_{2p}$ is the semidirect product of the normal subgroup
 $\mathcal{B}(F_2D_{2p})$ with the group $\langle b \rangle$. Further,
  $\displaystyle \mathcal{U}(F_2D_{2p})=\mathcal{U}_*(F_2D_{2p}) \times \prod_{i
= 1}^k \langle z_i \rangle
 $, where $ z_i$ is an invertible element in the 
 center of the group algebra $F_2D_{2p}$ of order $2^{\frac{d}{2}}-1$, if $d$ is
even; otherwise it is of order $2^d-1$ .
 \end{theo}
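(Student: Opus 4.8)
The plan is to carry everything through the epimorphism $T'':\mathcal{U}(F_2D_{2p})\to\prod_{i=1}^{k}GL_2(K_i)$ with $\ker T''=V=\langle 1+\widehat{D_{2p}}\rangle$ constructed in the proof of Theorem 1 (here $K_i=F_2(\gamma_i+\gamma_i^{-1})$), together with the fact, established in the structure theorem for $\mathcal{B}(F_2D_{2p})$, that its restriction $T'''$ is an isomorphism of $\mathcal{B}(F_2D_{2p})$ onto $\prod_{i=1}^{k}SL_2(K_i)$. First I would locate the canonical involution inside this picture. Every conjugacy class of $D_{2p}$ is inverse-closed, so $*$ fixes $Z(F_2D_{2p})$ pointwise, hence fixes every primitive central idempotent and so stabilises each Wedderburn block $M_2(K_i)$; by the Skolem--Noether theorem the induced involution on $M_2(K_i)$ has the form $X\mapsto P_iX^{T}P_i^{-1}$ and therefore preserves determinants. (A short computation shows it is in fact the map interchanging the two diagonal entries of a $2\times 2$ matrix, i.e.\ the adjoint involution of the alternating form with Gram matrix $\left(\begin{smallmatrix}0&1\\1&0\end{smallmatrix}\right)$, so that the unitary elements of the block are precisely $SL_2(K_i)$.) Hence, for $u\in\mathcal{U}_*(F_2D_{2p})$, the relation $u^{*}u=1$ yields $(\det T''(u)_i)^{2}=1$ in each block, and since $K_i$ has characteristic $2$ this forces $\det T''(u)_i=1$; thus $T''(\mathcal{U}_*(F_2D_{2p}))\subseteq\prod_{i=1}^{k}SL_2(K_i)$. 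As the nontrivial bicyclic units are unitary we have $\mathcal{B}(F_2D_{2p})\subseteq\mathcal{U}_*(F_2D_{2p})$, and $T'''(\mathcal{B}(F_2D_{2p}))=\prod SL_2(K_i)$, so $T''(\mathcal{U}_*(F_2D_{2p}))=\prod SL_2(K_i)$. Moreover $V\subseteq\mathcal{U}_*(F_2D_{2p})$, because $(1+\widehat{D_{2p}})^{*}=1+\widehat{D_{2p}}$ and $(1+\widehat{D_{2p}})^{2}=1$ (as $\widehat{D_{2p}}^{2}=0$ in characteristic $2$). Therefore $\ker(T''|_{\mathcal{U}_*(F_2D_{2p})})=V$ and $|\mathcal{U}_*(F_2D_{2p})|=2\,|\mathcal{B}(F_2D_{2p})|$.

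For the semidirect-product assertion, note that $b^{*}=b^{-1}$ gives $b\in\mathcal{U}_*(F_2D_{2p})$, while $b\notin\mathcal{B}(F_2D_{2p})$ by the lemma $D_{2p}\cap\mathcal{B}(F_2D_{2p})=\langle a\rangle$; since $b^{2}=1$ this yields $\langle b\rangle\cap\mathcal{B}(F_2D_{2p})=\{1\}$, so $\mathcal{B}(F_2D_{2p})\langle b\rangle$ is a subgroup of $\mathcal{U}_*(F_2D_{2p})$ of order $2\,|\mathcal{B}(F_2D_{2p})|$, hence all of $\mathcal{U}_*(F_2D_{2p})$. Conjugation by $b$ sends a bicyclic unit $u_{g,h}$ to $u_{bgb^{-1},\,bhb^{-1}}$, again a bicyclic unit, so $b$ normalises $\mathcal{B}(F_2D_{2p})$; therefore $\mathcal{B}(F_2D_{2p})$ is normal in $\mathcal{U}_*(F_2D_{2p})=\mathcal{B}(F_2D_{2p})\rtimes\langle b\rangle$.

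Finally, the direct-product decomposition. Since $Z(F_2D_{2p})$ maps onto the centre $F_2\oplus\prod K_i$ of the semisimple quotient $F_2D_{2p}/F_2\widehat{D_{2p}}$ (a dimension count, or lifting of central idempotents), $T''$ maps the central subgroup $\mathcal{U}(Z(F_2D_{2p}))$ onto the full group $\prod K_i^{*}I$ of central scalar tuples; as $\prod K_i^{*}$ has odd order while $\ker T''=V$ has order $2$, the set $O$ of odd-order elements of $\mathcal{U}(Z(F_2D_{2p}))$ maps isomorphically onto $\prod K_i^{*}I$. Choosing for each $i$ a generator $\zeta_i$ of $K_i^{*}$ and letting $z_i\in O$ be the preimage of the tuple equal to $\zeta_i$ in coordinate $i$ and to $1$ elsewhere, we obtain central units with $o(z_i)=|K_i|-1$, which is $2^{d/2}-1$ when $d$ is even and $2^{d}-1$ when $d$ is odd (using $[K_i:F_2]=d/2$ resp.\ $d$), and $O=\langle z_1\rangle\times\cdots\times\langle z_k\rangle$. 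A scalar matrix lies in $SL_2(K_i)$ only if it is the identity (characteristic $2$), so $O\cap\mathcal{U}_*(F_2D_{2p})=\{1\}$; and every element of $K_i^{*}$ is a square (the Frobenius is bijective), so $GL_2(K_i)=K_i^{*}I\cdot SL_2(K_i)$, whence $T''(O\cdot\mathcal{U}_*(F_2D_{2p}))=\prod GL_2(K_i)=T''(\mathcal{U}(F_2D_{2p}))$; since $V=\ker T''\subseteq\mathcal{U}_*(F_2D_{2p})$ this forces $\mathcal{U}(F_2D_{2p})=\mathcal{U}_*(F_2D_{2p})\cdot O$. With $O$ central this is the desired $\mathcal{U}(F_2D_{2p})=\mathcal{U}_*(F_2D_{2p})\times\prod_{i=1}^{k}\langle z_i\rangle$, and in particular $\mathcal{U}_*(F_2D_{2p})$ and $\mathcal{B}(F_2D_{2p})$ are normal in $\mathcal{U}(F_2D_{2p})$.

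The step I expect to be the main obstacle is not the representation theory, which Theorem 1 and the structure theorem for $\mathcal{B}(F_2D_{2p})$ essentially hand us, but the bookkeeping around the two-element kernel $V$: verifying $V\subseteq\mathcal{U}_*(F_2D_{2p})$, producing central units $z_i$ of the exact odd order $|K_i|-1$ (surjectivity onto the centre of the semisimple quotient, plus the Hall $2'$ argument), and checking that the relevant intersections are trivial, together with the identification of the induced block involution as determinant-preserving.
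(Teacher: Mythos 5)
Your argument is correct, and while it lives in the same Wedderburn framework as the paper (the epimorphism $T''$ with kernel $V=\langle 1+\widehat{D_{2p}}\rangle$, the isomorphism $\mathcal{B}(F_2D_{2p})\cong\prod_i SL_2(K_i)$, and the decomposition of $GL_2(K_i)$ into $SL_2(K_i)$ times scalars), it reaches the two assertions by a genuinely different route at the decisive points. For the unitary subgroup, the paper never analyzes the induced involution on the matrix blocks: it first proves $\mathcal{U}(F_2D_{2p})=W\times(\mathcal{B}(F_2D_{2p})\rtimes\langle b\rangle)$ and then extracts $\mathcal{U}_*(F_2D_{2p})=\mathcal{B}(F_2D_{2p})\rtimes\langle b\rangle$ from $W\cap\mathcal{U}_*(F_2D_{2p})=\{1\}$, whereas you compute $T''(\mathcal{U}_*(F_2D_{2p}))=\prod_i SL_2(K_i)$ directly by identifying the block involution (via the inverse-closed conjugacy classes, Skolem--Noether, or the explicit check on $S_{\gamma_i}(a),S_{\gamma_i}(b)$ that it is the adjugate/symplectic involution, hence determinant-preserving) and then finish with the order count $|\mathcal{U}_*|=|V|\cdot|\prod_i SL_2(K_i)|=2|\mathcal{B}(F_2D_{2p})|=|\mathcal{B}(F_2D_{2p})\langle b\rangle|$; this gives an intrinsic characterization of unitary units blockwise that the paper only obtains implicitly. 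For the complement, the paper constructs the $z_i$ explicitly as polynomials in $a+a^{-1}$ (preimages of scalar tuples built from the $h_i'$ and $y_0$), while you obtain them abstractly as the odd-order Hall subgroup $O$ of $\mathcal{U}(Z(F_2D_{2p}))$, using the surjection of the centre onto $F_2\oplus\prod_i K_i$ and the fact that $\ker T''$ has order $2$; your version is less constructive but cleaner, and the paper's version hands you concrete central units. Two small points you should spell out: in the step $O\cap\mathcal{U}_*(F_2D_{2p})=\{1\}$, triviality of the image only puts the element in $V=\ker T''$, and you then need the parity clash (odd order of $O$ versus $|V|=2$) to conclude it is $1$; and the claim that elements of $O$ map to scalar tuples uses that $T'$ is onto, so central elements land in the centre of $\prod_i M_2(K_i)$. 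Both are immediate from what you set up, but they are where the two-element kernel bookkeeping you flagged actually bites.
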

\begin{proof}
Since $GL_2(F_2(\gamma + \gamma^{-1}))$ is the direct product of $ SL_2(F_2(\gamma
+ \gamma^{-1}))$  with the group
 consisting of all nonzero scalar matrices, we have
\[\mathcal{U}(F_2D_{2p})/V \cong 
\prod_{i=1}^{k}(SL_2(F_2(\gamma_i+\gamma_i^{-1}))\times
(F_2(\gamma_i+\gamma_i^{-1}))^*I_{2 \times 2}), \]
where $F_2(\gamma+ \gamma^{-1})^*$ is the group of all nonzero elements of
$F_2(\gamma + \gamma^{-1})$. Let $F_2(\gamma_i + \gamma_i^{-1})^* = \langle
\eta_i \rangle$ for $1 \leq i \leq k$. Since $\displaystyle y_i =
\prod_{\stackrel{j = 1}{
j \neq i}}^k f_j'(\gamma_i + \gamma_i^{-1})$ is a non zero element of
$F_2(\gamma_i + \gamma_i^{-1})$, take $\{y_i, y_i(\gamma_i + \gamma_i^{-1}),
\ldots y_i(\gamma_i + \gamma_i^{-1})^{t -1} \}$ as a basis of $F_2(\gamma_i +
\gamma_i^{-1})$ over $F_2$. Here $t = \frac{d}{2}$ when $d$ is even; otherwise
$t = d$. Therefore, $\eta_i = y_ih_i(\gamma_i + \gamma_i^{-1}) = h_i'(\gamma_i +
\gamma_i^{-1})$, where $h_i(x) \ \text{and} \ h_i'(x) \in F_2[x]$. Also note
that
$h_i'(\gamma_j + \gamma_j^{-1}) = 0$ for $i \neq j$. If the constant coefficient
of $h_i'(x)$ is $\alpha_{\eta_i}$, then the image of $h_i'(a + a^{-1})$ under
the map $T'$ is the element $x_i'$ such that the first component of $x_i'$ is
$\alpha_{\eta_i}$, $(i+1)$-th component is $\left(
\begin{array}{cc}
  \eta_i & 0 \\
 0  & \eta_i \\
\end{array}
\right)$ and all the remaining components are zero matrix. Further, if
$\displaystyle y_0(x) = \prod_{i = 1}^kf_i'(x)$, then $y_0(\gamma_i +
\gamma_i^{-1}) = 0,
1 \leq i \leq k$ and the constant coefficient of $y_0(x) \ \text{is} \ 1$. It
follows that the image of $y_0(a + a^{-1})$ under the map $T'$ is the element
whose first component is $1$ and remaining components are zero. Choose $x_i$
such that $ (i + 1)$-th component is $\left(
\begin{array}{cc}
  \eta_i & 0 \\
 0  & \eta_i \\
\end{array}
\right)$ and the remaining components are identity matrix.
If $z_i$ denotes a preimage of $x_i$, then either \[z_i  = \displaystyle \sum_{\stackrel{j = 1}{j \neq
i}}^k
h_j'(a + a^{-1})^{2^t -1} +  h_i'(a + a^{-1})\] or \[z_i = \displaystyle
\sum_{\stackrel{j = 1}{j
\neq i}}^k h_j'(a + a^{-1})^{2^t -1} +  h_i'(a + a^{-1}) + y_0(a + a^{-1})\]
which
are in the center of $\mathcal{U}(F_2D_{2p})$ of order $2^t -1$. Further, since $\langle
z_i \rangle \cap \displaystyle \langle  
z_j \ | \ 1 \leq j \leq k, j \neq i\rangle = \{1\} $, take $W = \displaystyle
\prod_{i = 1}^k \langle
z_i \rangle$. Also, note that $W \cap \mathcal{U}_*(F_2D_{2p}) = \{1 \}$ and
therefore $\mathcal{U}(F_2D_{2p}) = W \times (\mathcal{B}(F_2D_{2p}) \rtimes
\langle b \rangle)$ and $U_{*}(F_2D_{2p})= \mathcal{B}(F_2D_{2p}) \rtimes
\langle b \rangle$.
\end{proof}
 
\begin{corollary}
The group generated by bicyclic units $\mathcal{B}(F_2D_{2p})$
 and the unitary subgroup $\mathcal{U}_*(F_2D_{2p})$ are
normal subgroups of
$\mathcal{U}(F_2D_{2p})$.
\end{corollary}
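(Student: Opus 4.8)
The plan is to read the corollary off the structural description already established in Theorem 3. That theorem exhibits $\mathcal{U}(F_2D_{2p})$ as an internal direct product $W \times \mathcal{U}_*(F_2D_{2p})$, where $W = \prod_{i=1}^k \langle z_i\rangle$ is generated by units lying in the center of $F_2D_{2p}$, and it exhibits $\mathcal{U}_*(F_2D_{2p})$ as an internal semidirect product $\mathcal{B}(F_2D_{2p}) \rtimes \langle b\rangle$ in which $\mathcal{B}(F_2D_{2p})$ is the normal factor. So the whole proof reduces to a short conjugation computation, using that $W$ centralizes everything.

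First I would note that $\mathcal{U}_*(F_2D_{2p}) \trianglelefteq \mathcal{U}(F_2D_{2p})$: being one factor of the internal direct product $W \times \mathcal{U}_*(F_2D_{2p})$ from Theorem 3, it is automatically normal. Concretely, writing an arbitrary $u \in \mathcal{U}(F_2D_{2p})$ as $u = wv$ with $w \in W$ central and $v \in \mathcal{U}_*(F_2D_{2p})$, we get $u\,\mathcal{U}_*(F_2D_{2p})\,u^{-1} = w\bigl(v\,\mathcal{U}_*(F_2D_{2p})\,v^{-1}\bigr)w^{-1} = \mathcal{U}_*(F_2D_{2p})$. For $\mathcal{B}(F_2D_{2p})$, take again $u = wv$ as above. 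By Theorem 3, $v\,\mathcal{B}(F_2D_{2p})\,v^{-1} = \mathcal{B}(F_2D_{2p})$ since $\mathcal{B}(F_2D_{2p})$ is normal in $\mathcal{U}_*(F_2D_{2p})$, and $w\,\mathcal{B}(F_2D_{2p})\,w^{-1} = \mathcal{B}(F_2D_{2p})$ since $w$ is central in $\mathcal{U}(F_2D_{2p})$. Hence $u\,\mathcal{B}(F_2D_{2p})\,u^{-1} = w\,\mathcal{B}(F_2D_{2p})\,w^{-1} = \mathcal{B}(F_2D_{2p})$, so $\mathcal{B}(F_2D_{2p}) \trianglelefteq \mathcal{U}(F_2D_{2p})$.

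I do not expect a genuine obstacle here; the only point meriting care is the claim that $W$ consists of central units, but that is exactly what Theorem 3 asserts (each $z_i$ lies in the center of $F_2D_{2p}$), so nothing new is needed. It is worth remarking why the more naive approach via the quotient is less convenient: under $T''$ the subgroup $\mathcal{B}(F_2D_{2p})$ maps isomorphically onto $\prod_{i=1}^k SL_2(F_2(\gamma_i+\gamma_i^{-1}))$, which is normal in $\prod_{i=1}^k GL_2(F_2(\gamma_i+\gamma_i^{-1})) \cong \mathcal{U}(F_2D_{2p})/V$; but since the kernel $V = \langle 1 + \widehat{D_{2p}}\rangle$ is not contained in $\mathcal{B}(F_2D_{2p})$, pulling this back only yields normality of $\mathcal{B}(F_2D_{2p})\cdot V$ rather than of $\mathcal{B}(F_2D_{2p})$ itself. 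The direct-product argument above avoids this and gives the statement directly.
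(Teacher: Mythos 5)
Your argument is correct and is exactly the intended one: the paper leaves this corollary without a separate proof because it follows immediately from the decomposition $\mathcal{U}(F_2D_{2p}) = W \times (\mathcal{B}(F_2D_{2p}) \rtimes \langle b \rangle)$ of Theorem 3, with $W$ central, which is precisely what you use. Your closing remark on why pulling back normality of $\prod_{i=1}^k SL_2(F_2(\gamma_i+\gamma_i^{-1}))$ through $T''$ would only give normality of $\mathcal{B}(F_2D_{2p})V$ is a correct and worthwhile observation, but not needed for the proof.
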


 \begin{corollary}
 The commutator subgroup $\mathcal{U}'(F_2D_{2p})=\mathcal{U}_*'(F_2D_{2p})$.
Also, $\mathcal{U}'(F_2D_{2p})$ is a normal subgroup of $\mathcal{B}(F_2D_{2p})$.
 \end{corollary}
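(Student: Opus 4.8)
The plan is to read off both assertions from the structural decompositions established in Theorem 3, so essentially no new computation is needed.

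First I would prove $\mathcal{U}'(F_2D_{2p}) = \mathcal{U}_*'(F_2D_{2p})$. By Theorem 3 we have $\mathcal{U}(F_2D_{2p}) = W \times \mathcal{U}_*(F_2D_{2p})$, where $W = \prod_{i=1}^{k}\langle z_i\rangle$ and each $z_i$ lies in the center of $\mathcal{U}(F_2D_{2p})$; in particular $W$ is abelian, so $W' = \{1\}$. Since $(A \times B)' = A' \times B'$ for any direct product, it follows that $\mathcal{U}'(F_2D_{2p}) = W' \times \mathcal{U}_*'(F_2D_{2p}) = \mathcal{U}_*'(F_2D_{2p})$, which is the first assertion.

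Next I would show $\mathcal{U}'(F_2D_{2p}) \subseteq \mathcal{B}(F_2D_{2p})$. Again by Theorem 3, $\mathcal{U}_*(F_2D_{2p}) = \mathcal{B}(F_2D_{2p}) \rtimes \langle b\rangle$, so the quotient $\mathcal{U}_*(F_2D_{2p})/\mathcal{B}(F_2D_{2p}) \cong \langle b\rangle$ is cyclic of order $2$, hence abelian. Therefore $\mathcal{U}_*'(F_2D_{2p}) \subseteq \mathcal{B}(F_2D_{2p})$, and combining with the previous step, $\mathcal{U}'(F_2D_{2p}) \subseteq \mathcal{B}(F_2D_{2p})$.

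Finally, the normality claim is automatic: $\mathcal{U}'(F_2D_{2p})$, being a commutator subgroup, is normal in $\mathcal{U}(F_2D_{2p})$, and it is contained in the subgroup $\mathcal{B}(F_2D_{2p})$, which is itself normal in $\mathcal{U}(F_2D_{2p})$ by Corollary 1; a subgroup that is normal in the ambient group is a fortiori normal in every intermediate subgroup, so $\mathcal{U}'(F_2D_{2p}) \trianglelefteq \mathcal{B}(F_2D_{2p})$. I do not expect a genuine obstacle here, since every ingredient is supplied by Theorem 3 and Corollary 1; the only point that deserves an explicit sentence is the reminder that $W$ really is central, which is exactly what Theorem 3 records. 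One could push further and try to pin down $\mathcal{U}'(F_2D_{2p})$ explicitly inside $\prod_{i=1}^{k} SL_2(F_2(\gamma_i+\gamma_i^{-1}))$ by analysing the commutator subgroup of each $SL_2$ over the relevant field together with the conjugation action of $b$, but that refinement is not needed for the statement at hand.
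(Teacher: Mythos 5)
Your proposal is correct and follows essentially the same route as the paper: both deduce $\mathcal{U}'(F_2D_{2p})=\mathcal{U}_*'(F_2D_{2p})$ from the decomposition $\mathcal{U}(F_2D_{2p})=W\times\mathcal{U}_*(F_2D_{2p})$ with $W$ central, and both obtain $\mathcal{U}_*'(F_2D_{2p})\leq\mathcal{B}(F_2D_{2p})$ from $\mathcal{U}_*(F_2D_{2p})=\mathcal{B}(F_2D_{2p})\rtimes\langle b\rangle$, with normality then following since a commutator subgroup is normal in the ambient group and hence in any intermediate subgroup containing it. Your write-up merely makes explicit the steps the paper leaves to "hence the result follows."
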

 \begin{proof}
Since $\mathcal{U}(F_2D_{2p})= W \times \mathcal{U}_*(F_2D_{2p})$ such that $W$
is in the center of $F_2D_{2p}$, it follows that $\mathcal{U}'(F_2D_{2p}) =
\mathcal{U}_*'(F_2D_{2p})$ 
Further, since $\mathcal{U}_*(F_2D_{2p}) = \mathcal{B}(F_2D_{2p}) \rtimes
\langle b
 \rangle$
  and $ b $ is in the normalizer of $\mathcal{B}(F_2D_{2p})$, it implies that 
$\mathcal{U}_*'(F_2D_{2p}) \leq \mathcal{B}(F_2D_{2p}) \leq
 \mathcal{U}_*(F_2D_{2p}) $ and hence the result follows.

  \end{proof}

 \bibliographystyle{plain}
 \bibliography{UF2D2p}

\end{document}